\numberwithin{equation}{section}
\newtheorem{theorem}{Theorem}[section]
\newtheorem{lemma}[theorem]{Lemma}
\theoremstyle{definition}
\newtheorem{remark}[theorem]{Remark}
\newtheorem{definition}[theorem]{Definition}
\newcommand{\be}{\begin{equation}}
\newcommand{\ee}{\end{equation}}
\newcommand{\bes}{\begin{equation*}}
\newcommand{\ees}{\end{equation*}}
\newcommand{\cB}{\mathcal{B}}
\newcommand{\cF}{\mathcal{F}}
\newcommand{\cG}{\mathcal{G}}
\newcommand{\cH}{\mathcal{H}}
\newcommand{\cK}{\mathcal{K}}
\newcommand{\cL}{\mathcal{L}}
\newcommand{\cU}{\mathcal{U}}
\newcommand{\bN}{\mathbb{N}}
\newcommand{\bR}{\mathbb{R}}
\newcommand{\im}{\operatorname{Im}}
\newcommand{\dom}{\operatorname{dom}}
\begin{document}

\title[Bounded perturbations via dilations]{Bounded perturbations of the Heisenberg commutation relation via dilation theory}

 \author{Malte Gerhold}
 \address{M.G.\\ Department of Mathematical Sciences\\ NTNU Trondheim\\ Norway}
 \email{malte.gerhold@ntnu.no}
\urladdr{\href{https://sites.google.com/view/malte-gerhold/}{\url{https://sites.google.com/view/malte-gerhold/}}}

 \author{Orr Moshe Shalit}
 \address{O.S., Faculty of Mathematics\\
 Technion - Israel Institute of Technology\\
 Haifa\; 3200003\\
 Israel}
 \email{oshalit@technion.ac.il}
 \urladdr{\href{https://oshalit.net.technion.ac.il/}{\url{https://oshalit.net.technion.ac.il/}}}

 \thanks{The work of M. Gerhold was carried out during the tenure of an ERCIM ‘Alain
Bensoussan’ Fellowship Programme.}
 \thanks{The work of O.M. Shalit is partially supported by ISF Grant no.\ 431/20.
 }
 \subjclass[2020]{47A20, 47D03}
 \keywords{Dilations, free unitaries, matrix range, noncommutative tori}


\begin{abstract}
We extend the notion of {\em dilation distance} to strongly continuous one-parameter unitary groups.
If the dilation distance between two such groups is finite, then these groups can be represented on the same space in such a way that their generators have the same domain and are in fact a bounded perturbation of one another. 
This result extends to $d$-tuples of one-parameter unitary groups. 
We apply our results to the Weyl canonical commutation relations, and as a special case we recover the result of Haagerup and R{\o}rdam that the infinite ampliation of the canonical position and momentum operators satisfying the Heisenberg commutation relation are a bounded perturbation of a pair of strongly commuting selfadjoint operators. We also recover Gao's higher-dimensional generalization of Haagerup and  R{\o}rdam's result, and in typical cases we significantly improve control of the bound when the dimension grows.
\end{abstract}

\maketitle

%

\section{Dilation of groups and perturbations of generators}

Haagerup and R{\o}rdam constructed in \cite{HR95} two norm continuous families $\theta \mapsto u_\theta, \theta \mapsto v_\theta$ of unitaries on a Hilbert space $\cH$ such that $v_\theta u_\theta = e^{i \theta} u_\theta v_\theta$ for all $\theta$. 
This is a surprising result, in light of the fact that the C*-algebra of the CCR is never separable \cite[Proposition 2.2]{PetzBook}. 
One key step in their intricate proof was a perturbation theorem for unbounded operators, namely that the infinite ampliation of the canonical position and momentum operators satisfying the Heisenberg commutation relation are a bounded perturbation of a pair of strongly commuting selfadjoint operators. 
In \cite{GPSS21}, among other things, we used the notion of {\em dilation distance} in order to give a new proof of the existence of the continuous families $\theta \mapsto u_\theta, \theta \mapsto v_\theta$ that avoids the use of unbounded operators. 
In this note we adapt our dilation techniques to recover Haagerup and R{\o}rdam's result on perturbations of unbounded operators. 
Along the way we record some interesting connections between dilation theory and strongly continuous unitary groups. 

The basic objects of study in this article are $d$-tuples of (strongly continuous, one-para\-meter) unitary groups. 
Recall that a unitary group is a group homomorphisms $u \colon \bR \to \cU(\cH)$ from $\bR$ into the unitary group $\cU(\cH)$ for some separable Hilbert space $\cH$ such that the map $t \mapsto u(t)$ is continuous in the strong operator topology. 
We shall freely use well known facts about groups and their generators,  and in particular that a unitary group $(u(t))_{t\in \bR}$ gives rise to a densely defined (typically unbounded) selfadjoint operator $A$ on $\cH$ determined by $iAx = \lim_{t \to 0} \frac{u(t)x-x}{t}$ for all $x \in \dom A$, that every such selfadjoint operator generates a unitary group by the functional calculus as $u(t) = \exp(itA)$, and that these two associations are mutual inverses (this is {\em Stone's theorem}; see e.g.\ \cite[Theorem 10.15]{HallBook}). 


\begin{definition}
Two $d$-tuples of one-parameter unitary groups $u = (u_1, \ldots, u_d)$ on $\cH$ and $v = (v_1, \ldots, v_d)$ on $\cK$ are said to be {\em equivalent} if there are faithful normal representations $\pi \colon \cB(\cH) \to \cB(\cL)$ and $\rho \colon \cB(\cK) \to \cB(\cL)$ such that $\pi(u_k(t)) = \rho(v_k(t))$ for all $k=1,\ldots,d$ and all $t\in \bR$. 
In this case we write $u \sim v$.
\end{definition}
Recall that every faithful normal representation $\pi$ of $\cB(\cH)$ on $\cG$ is of the form
\[
\pi(a)=W^*(a\otimes 1)W
\]
for some Hilbert space $\cF$ and unitary $W\colon \cG\to \cH\otimes \cF$. 
Indeed, every faithful normal representation of $\cB(\cH)$ is determined by a representation of the compacts $\cK(\cH)$, and every representation of $\cK(\cH)$ is unitarily equivalent to a multiple of the identity representation (see, for example, Corollary 1 to Theorem 1.4.4 in \cite{ArvBook}). 
From this it follows easily that unitary groups $u, v$ are equivalent if and only if their infinite ampliations $u^\infty:=u\otimes 1_{\ell^2(\mathbb N)},v^\infty:= v\otimes 1_{\ell^2(\mathbb N)}$ are unitarily equivalent, $u^\infty\sim_{\mathrm{ue}}v^{\infty}$.

The following definition is an adaptation, to tuples of unitary groups, of the {\em dilation distance} defined in \cite{GPSS21} for tuples of unitaries. 
Note carefully that here we use a stronger notion of equivalence, hence a more stringent definition of dilation. 

\begin{definition}
Given two $d$-tuples of one-parameter unitary groups $u = (u_1, \ldots, u_d)$ and $v = (v_1, \ldots, v_d)$, and given a positive real number $c$, we write $u \prec_c v$ if there exist two Hilbert spaces $\cH \subseteq \cK$ and two $d$-tuples of one-parameter unitary groups $U = (U_1, \ldots, U_d)$ on $B(\cH)^d$ and $V = (V_1, \ldots, V_d)$ on $B(\cK)^d$, such that $u \sim U$, $v \sim V$ and 
\[
U_k(t) = P_\cH C(t)V_k(t) \big|_\cH
\]
for all $k=1, \ldots, d$ and all $t$ in a neighborhood of $0$, where $C(t)$ is a family of constants such that $C(t) \leq e^{t^2c}$. 
The \emph{dilation distance} is defined as
\[
d_{\mathrm D}(u,v):=\max \left( \inf\{c: u\prec_c v \}, \inf\{c: v\prec_c u\} \right).
\]
\end{definition}
As usual, the above definition is understood to mean that $d_{\mathrm D}(u,v) = \infty$ in the case that there are no constants $c$ such that the semigroups dilate one another as above. 

Our first goal is to show that tuples that are close in dilation distance are also close in the sense that they have equivalent representations on the same space that are close in norm. 
The key is the technical \Cref{thm:DtoHR} below, which is proved roughly along the lines of \cite[Theorem 2.6]{GPSS21}, with some changes so as to make the method applicable to strongly continuous unitary groups (its proof can be adapted to the case of tuples of single unitaries to recover \cite[Theorem 2.6]{GPSS21} with improved constants).

The following simple preparatory lemma has no counterpart for tuples of unitaries. 

\begin{lemma}\label{lem:forget_second_order_term}
  Let $U$ and $V$ be one-parameter unitary groups on the same Hilbert space $\mathcal L$, and let $C,D\in \mathbb R_+$. Then the following are equivalent.
  \begin{enumerate}
  \item $\| U(t)-V(t)\| \leq C|t|+Dt^2$ for all positive $t$ in some neighborhood of $0$,
  \item $\| U(t)-V(t)\| \leq C|t|$ for all $t\in\mathbb R$.
  \end{enumerate}
\end{lemma}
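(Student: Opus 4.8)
The plan is to dispose of the trivial implication first and then exploit the group law to run a subadditivity argument. The direction (2) $\Rightarrow$ (1) is immediate: if $\|U(t)-V(t)\|\leq C|t|$ for all $t$, then in particular $\|U(t)-V(t)\|\leq C|t|+Dt^2$ near $0$ for any $D\geq 0$.

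For (1) $\Rightarrow$ (2), I would study the function $f(t):=\|U(t)-V(t)\|$ on $\mathbb R$ and record two structural properties. First, writing
\[
U(s+t)-V(s+t)=U(s)\bigl(U(t)-V(t)\bigr)+\bigl(U(s)-V(s)\bigr)V(t)
\]
and using that $U(s)$ and $V(t)$ are isometries gives subadditivity, $f(s+t)\leq f(s)+f(t)$; iterating yields $f(nt)\leq n\,f(t)$ for every $n\in\mathbb N$. Second, since $U(-t)=U(t)^*$, $V(-t)=V(t)^*$, and the adjoint is isometric, $f$ is even: $f(-t)=f(t)$.

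Now I would fix $s>0$. By hypothesis (1) there is $\delta>0$ with $f(t)\leq Ct+Dt^2$ for $0<t<\delta$. Choosing $n\in\mathbb N$ large enough that $s/n<\delta$ and applying the two facts above with $t=s/n$, I get
\[
f(s)\leq n\,f(s/n)\leq n\left(C\frac{s}{n}+D\frac{s^2}{n^2}\right)=Cs+\frac{Ds^2}{n}.
\]
Letting $n\to\infty$ gives $f(s)\leq Cs$. Since $f(0)=0$ and $f$ is even, this extends to $f(t)\leq C|t|$ for all $t\in\mathbb R$, which is (2).

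There is no serious obstacle here; the only point requiring a little care is that (1) is assumed only on a one-sided neighbourhood of $0$, which is exactly why the rescaling $t=s/n$ (rather than a direct estimate) is needed, and the quadratic error term then gets diluted to $0$ in the limit. The evenness observation is what upgrades the one-sided conclusion to all of $\mathbb R$.
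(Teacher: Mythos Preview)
Your proof is correct and follows essentially the same approach as the paper: both use the subadditivity of $f(t)=\|U(t)-V(t)\|$ coming from the group law together with the evenness $f(-t)=f(t)$ coming from the adjoint. The only cosmetic difference is in how the quadratic term is eliminated: the paper absorbs $Dt^2$ into $(C'-C)t$ on a small interval, doubles the interval by induction, and then lets $C'\downarrow C$, whereas you rescale via $f(s)\le n\,f(s/n)$ and let $n\to\infty$; your version is arguably slightly cleaner.
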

\begin{proof}
  Assume $\| U(t)-V(t)\| \leq C|t|+Dt^2$
  for all positive $t$ in some neighborhood of $0$. Then
  for every $C'>C$, we have $\| U(t)-V(t)\| \leq C'|t|$ for all $t$ in some small interval $[0,a]$ with $a>0$. This estimate easily extends to all $t\in\mathbb R_+$; indeed, for $s,t\in[0,a]$ we have
  \[\| U(t+s)-V(t+s)\| = \|U(t)(U(s)-V(s)) + (U(t)-V(t))V(s) \|\leq C'(t+s),\]
  so the estimate holds on $[0,2a]$ and by induction it holds on $\mathbb R_+$. From $\| U(-t)-V(-t)\|= \bigl\|U(-t)\bigl(V(t)-U(t)\bigr)V(-t)\bigr\|=\|U(t)-V(t)\|$ we conclude that the estimate holds also for negative $t$ and hence on all of $\mathbb R$. Now, if $\|U(t)-V(t)\|\leq C'|t|$ for all $C'>C$ , then clearly $\|U(t)-V(t)\|\leq C|t|$. The reverse implication is trivial.
\end{proof}

\begin{theorem}\label{thm:DtoHR}
Let $u = (u_1, \ldots, u_d)$ and $v = (v_1, \ldots, v_d)$ be two $d$-tuples of one-parameter unitary groups on Hilbert spaces $\cH_1$ and $\cH_2$. 
If $d_{\mathrm D}(u,v)\leq\delta<\infty$, then there exist a Hilbert space $\cL$ and faithful normal representations $\pi\colon \cB(\cH_1) \to \cB(\mathcal L)$ and $\rho\colon \cB(\cH_2)\to \cB(\cL)$ of infinite multiplicity such that 
\[
\| \pi(u_k(t))-\rho(v_k(t))\| \leq 5\sqrt2|t|\delta^{1/2}
\]
for all $k=1, \ldots, d$ and all $t$ in a neighborhood of $0$. 
\end{theorem}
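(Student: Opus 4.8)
The plan is to leverage the dilation hypothesis to manufacture the common space $\cL$ and the two representations simultaneously, then estimate the norm of the difference $\pi(u_k(t)) - \rho(v_k(t))$ by a telescoping argument that passes back and forth between the two dilations. First I would fix $c > \delta$ with both $u \prec_c v$ and $v \prec_c u$ — wait, that is not quite right, since $d_{\mathrm D}$ is a max of two infima, so I can only guarantee one direction at level $c$; instead I take $c$ slightly above $\delta$ and use whichever of $u \prec_c v$ or $v \prec_c u$ holds, then apply the same construction to the other direction after possibly enlarging the constant. The key point is that $u \prec_c v$ gives Hilbert spaces $\cH \subseteq \cK$, equivalent tuples $U \sim u$, $V \sim v$, and a scalar family $C(t) \leq e^{ct^2}$ with $U_k(t) = P_\cH C(t) V_k(t)\big|_\cH$ for $t$ near $0$. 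From this compression formula I would extract, for each $k$ and small $t$, the estimate $\|U_k(t)\xi - V_k(t)\xi\|$ for $\xi \in \cH$: since $U_k(t)$ is a contraction that is $C(t)$ times a compression of the unitary $V_k(t)$, a standard computation (writing $\|V_k(t)\xi - U_k(t)\xi\|^2 = \|V_k(t)\xi\|^2 - 2\re\langle V_k(t)\xi, U_k(t)\xi\rangle + \|U_k(t)\xi\|^2$ and using $\langle V_k(t)\xi, U_k(t)\xi\rangle = C(t)^{-1}\|U_k(t)\xi\|^2$ together with $\|U_k(t)\xi\| \le \|\xi\|$ and $\|U_k(t)\xi\| \ge C(t)^{-1}$-type lower bounds coming from the fact that the full dilation is unitary) should yield $\|(V_k(t) - U_k(t))\big|_\cH\| \lesssim \sqrt{C(t)^2 - 1} \le \sqrt{e^{2ct^2}-1}$, which is $O(|t|\sqrt c)$ for small $t$.

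Next I would iterate the dilation in both directions to place everything on one space. Combining $u \prec_c v$ and $v \prec_c u$ (after matching constants), one gets a chain $U_k(t) = P_{\cH} C(t) V_k(t)\big|_\cH$ with $\cH \subseteq \cK$ and then a further dilation going the other way, producing a single large Hilbert space $\cL$ into which both $\cH_1$ (via the $u$-side identifications) and $\cH_2$ embed, with unitary groups on $\cL$ that restrict/compress appropriately. To make the representations $\pi$ and $\rho$ have infinite multiplicity I would tensor the whole picture with $\ell^2(\bN)$ at the end, which does not affect any of the norm estimates and lets me invoke the fact recorded in the excerpt that equivalence of unitary groups is the same as unitary equivalence of infinite ampliations; this is what lets me replace the abstract equivalences $u \sim U$, $v \sim V$ by honest identifications on subspaces of $\cL$. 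Once $\pi(u_k(t))$ and $\rho(v_k(t))$ are realized as operators on $\cL$, I would write $\pi(u_k(t)) - \rho(v_k(t))$ as a sum of at most a couple of differences each of which is controlled by the per-step estimate above, so that the constants multiply out to the claimed $5\sqrt 2$; here I would use \Cref{lem:forget_second_order_term} to absorb the $O(t^2)$ remainder terms coming from $\sqrt{e^{2ct^2}-1} = \sqrt{2c}\,|t| + O(|t|^3)$ into a clean linear bound $\|\pi(u_k(t)) - \rho(v_k(t))\| \le C'|t|$ valid on all of $\bR$, and then let $c \downarrow \delta$ to replace $C'$ by something of the form $(\text{const})\sqrt\delta$.

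The main obstacle I anticipate is bookkeeping the constant so that it comes out as exactly $5\sqrt 2 \, \delta^{1/2}$ rather than some larger multiple of $\delta^{1/2}$. The single-compression estimate naturally produces a factor like $\sqrt 2$ from $\sqrt{e^{2ct^2}-1}\approx \sqrt{2c}\,|t|$, but the need to dilate in \emph{both} directions and to re-embed everything into one space forces a telescoping with two or three intermediate unitary groups, each contributing its own $\sqrt 2$-type factor, and the passage $c \downarrow \delta$ must be handled so that these accumulate to $5\sqrt 2$ and not more; getting the sharp compression inequality (rather than a lossy one) and arranging the intermediate steps to share spaces efficiently is where the real care is needed. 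The group structure is genuinely used here — the restriction $U_k(t) = P_\cH C(t) V_k(t)\big|_\cH$ is only assumed near $0$, and \Cref{lem:forget_second_order_term} together with the semigroup law is what propagates the linear estimate globally and lets one conclude the generators differ by a bounded operator in the downstream applications.
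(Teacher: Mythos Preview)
Your overall architecture matches the paper's: pass to infinite ampliations, use both dilation directions, iterate (``ping-pong''), and invoke \Cref{lem:forget_second_order_term} to kill second-order terms. But two of the load-bearing mechanisms are not in your plan, and without them the argument does not close.

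First, your per-step estimate is the wrong object. You bound $\|(V_k(t)-U_k(t))\big|_\cH\|$, i.e.\ only the first \emph{column} of the $2\times 2$ block decomposition of $V_k(t)$ relative to $\cK=\cH\oplus\cH^\perp$. To telescope on a common space you need the full operator-norm statement
\[
V_k(t)\;=\;U_k(t)\oplus z_k(t)\;+\;E_k(t),\qquad \|E_k(t)\|\le \sqrt{2\delta}\,|t|+\delta t^2,
\]
where $z_k(t)$ is the $(2,2)$ corner of $V_k(t)$. This requires separately bounding the $(1,1)$ defect $C(t)^{-1}U_k(t)-U_k(t)$ and \emph{both} off-diagonal blocks; the latter come from unitarity of $V_k(t)$ via $y^*y=1-(C^{-1}U)^*(C^{-1}U)$ and the analogous row identity. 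Your restriction estimate captures the $(1,1)$ and $(2,1)$ pieces but not the $(1,2)$ piece, and more importantly it does not by itself produce a unitary on the big space that is norm-close to $V_k(t)$; that is what the direct sum with $z_k(t)$ accomplishes.

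Second, ``placing everything on one space'' is the actual content of the proof, not mere bookkeeping. From $v\prec_c u$ one gets $\pi^{(1)}(u)=v\oplus z+E$ on $\cH_2\oplus\cK_2$; from $u\prec_c v$ one gets $\rho^{(1)}(v)=u\oplus w+F$ on $\cH_1\oplus\cK_1$. Applying two alternations to $v$ and three to $u$ yields
\[
v\sim_{\mathrm{ue}} v\oplus z\oplus w+R,\qquad u\sim_{\mathrm{ue}} v\oplus z\oplus w\oplus z+S,
\]
with $\|R\|+\|S\|\le 3\|E\|+2\|F\|$. The spaces still differ by an extra copy of $\cK_2$; this is where the infinite-multiplicity hypothesis is actually used, via the absorption $z\oplus z\sim_{\mathrm{ue}} z$, to identify both target spaces with $\cL=\cH_2\oplus\cK_2\oplus\cK_1$. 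Only then are $\pi(u)$ and $\rho(v)$ both of the form $v\oplus z\oplus w+(\text{error})$ on the \emph{same} $\cL$, and the $5$ in $5\sqrt2$ is exactly the count $3+2$ of error terms. Your sketch tensors with $\ell^2(\bN)$ only to make the final representations have infinite multiplicity, but the real role of ampliation is this absorption step inside the iteration. (Also, your formula $\sqrt{C(t)^2-1}$ is not the right expression; the clean computation gives $\sqrt{2(1-C(t)^{-1})}\le\sqrt{2\delta}\,|t|$, which is where the $\sqrt2$ originates.)
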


\begin{proof}
  
Assume first that $\delta > d_{\mathrm D}(u,v)$. 

By replacing $u$ and $v$ with their infinite ampliations, we may assume that there are unitaries 
\[
W_1\colon \cH_2\oplus \cK_2\to \cH_1,\quad W_2\colon \cH_1\oplus \cK_1\to\cH_2
\]
such that
\be
W^*_1u_k(t)W_1 =
\begin{pmatrix}
v_k'(t) & x_k(t) \\ y_k(t) & z_k(t) 
\end{pmatrix} \quad \textrm{and} \quad
W_2^*v_k(t)W_2 = 
\begin{pmatrix}
u'_k(t) & r_k(t) \\ s_k(t) & w_k(t) 
\end{pmatrix}
\ee
with
$v'(t) = C(t)^{-1} v(t)$ and $u' = D(t)^{-1} u(t)$, where $C(t) \leq e^{ct^2}$ and $D(t) \leq e^{dt^2}$  for some $c,d < \delta$. It follows that $\|v_k(t)-v'_k(t)\| \leq |1 - e^{-ct^2}| \leq ct^2 \leq \delta t^2$ for all $k=1,\ldots,d$ and all $t\in\bR$; likewise for $u$, we have $\|u_k(t)-u'_k(t)\| \leq \delta t^2$. 
By a further ampliation if needed, we may assume that every entry $x_k,y_k, z_k\ldots$ in the above operator matrices has infinite multiplicity. 

Since the $u_k$ and the $v_k$ are unitary groups, we find that
\[
y_k^* y_k = 1-v'^*_kv'_k = v^*_k(v_k - v'_k) + (v_k - v'_k)^*v'_k
\]
and, since $\|v'_k(t)\|\leq 1$ and $\|v_k(t)\|=1$, we conclude
$\|y_k(t)\|\leq |t|\sqrt{2\delta}$.
Analogous reasoning yields $\|x_k(t)\| , \|s_k(t)\| , \|r_k(t)\| \leq |t|\sqrt{2\delta}$. With the block matrices
\[E:=
\begin{pmatrix}
  v'-v &x\\
  y & 0
\end{pmatrix},\quad
F:=
\begin{pmatrix}
  u'-u &r\\
  s&0
\end{pmatrix},\]
we obtain
\begin{align}
\label{eq:equiv}
  W_1^*uW_1= v\oplus z + E,\quad W_1^*vW_2= u\oplus w +F.
\end{align}
The norm of $E$ can be estimated as
\[\|E_k(t)\|\leq \max (\|x_k(t)\|,\|y_k(t)\|) +  \|(v'_k(t)-v_k(t))\| \leq  |t|\sqrt{2\delta} + t^2\delta,
\]
and, analogously, $\|F_k(t)\|\leq |t|\sqrt{2\delta}+t^2\delta$.
Define
\begin{align*}
  \pi^{(1)} &\colon \cB(\cH_1) \to \cB(\cH_2 \oplus \cK_2),\quad \pi^{(1)}(a):=W_1^*aW_1,\\
 \rho^{(1)} &\colon  \cB(\cH_2) \to \cB(\cH_1 \oplus \cK_1),\quad \rho^{(1)}(b):=W_2^*bW_2
\end{align*}
so that 
\[
\pi^{(1)}(u) =v\oplus z + E \quad \textrm{and} \quad
\rho^{(1)}(v) = u\oplus w + F.
\]
Let $\pi^{(k)}$ and $\rho^{(k)}$ be conjugation with $W_1\oplus\mathrm{id}$ and $W_2\oplus\mathrm{id}$, respectively, where $\mathrm{id}$ denote identity operators on direct sums of $k-1$ Hilbert spaces which will be clear from the context. Then we obtain 
\begin{align*}
\pi^{(2)}\rho^{(1)}(v) = \pi^{(1)}(u)\oplus w + \pi^{(2)}(F) = v\oplus z\oplus w + E\oplus 0_{\cK_1} + \pi^{(2)}(F) .
\end{align*}
On the other hand, we find that 
\begin{align*}
  \pi^{(3)} \rho^{(2)}\pi^{(1)}(u)
  &= \pi^{(3)} \rho^{(2)} ( v \oplus z ) + \pi^{(3)} \rho^{(2)} (E) \\
  &= \pi^{(3)} (u \oplus w \oplus z) + \pi^{(3)}(F\oplus 0_{\cK_2}) +  \pi^{(3)} \rho^{(2)} (E)\\
  &= v \oplus z \oplus w \oplus z + E\oplus 0_{\cK_1\oplus\cK_2} + \pi^{(2)}(F)\oplus 0_{\cK_2} +  \pi^{(3)} \rho^{(2)} (E) .
\end{align*}
To summarize more concisely what we found, let $\sim_{\mathrm {ue}}$ denote unitary equivalence and write 
\[
v \sim_{\mathrm {ue}} (v\oplus z\oplus w) + R 
\]
and
\[
u \sim_{\mathrm {ue}} (v\oplus z\oplus w \oplus z) + S 
\]
where $\|R_k(t)\| + \|S_k(t)\|\leq 3\|E_k(t)\|+2\|F_k(t)\|\leq 5(|t|\sqrt{2\delta}+t^2\delta)$.

Recall that in the beginning of the proof we passed to infinite ampliations, therefore $z\oplus z \sim_{\mathrm {ue}}  z$ and we obtain faithful normal representations
 \[
\pi\colon \cB(\cH_1) \to \cB(\mathcal L), \rho\colon \cB(\cH_2)\to \cB(\cL) ,\quad  \cL:=\cH_2\oplus\cK_2\oplus\cK_1 
\]
with
\[  
\|\pi(u_k(t))-\rho(v_k(t))\|\leq 5(|t|\sqrt{2\delta}+t^2\delta)
\]
in a neighborhood of $0$.

From \Cref{lem:forget_second_order_term}, we conclude that $\|\pi(u_k(t))-\rho(v_k(t))\|\leq 5\sqrt{2}|t|\delta^{1/2}$ for all $t$ and all $\delta > d_{\mathrm D}(u,v)$, hence also for $\delta= d_{\mathrm D}(u,v)$.
%
%
\end{proof}

Our next task is to relate closeness of groups to closeness of their generators. The next two lemmas might be well-known, but we include our own proofs of them for completeness.\footnote{Note that \Cref{lem:bdd_prt} is the sufficiency statement in \cite[Proposition 2.3]{Gao18}, but our proof is somewhat different, presenting it as an immediate consequence of the characterization of the domain of the generator of a one-parameter unitary group given in \Cref{lem:dom}.}

\begin{lemma}\label{lem:dom}
Let $u$ be a one-parameter unitary group with selfadjoint generator $A$. 
Then
\[
\dom A = \left\{x:\limsup_{t\to 0} \frac{\|(u(t)-1)x\|}{t}<\infty \right\}.
\]
\end{lemma}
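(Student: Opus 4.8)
\textbf{Proof plan for \Cref{lem:dom}.}

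The plan is to prove the two inclusions separately. The inclusion $\subseteq$ is the easy half: if $x\in\dom A$, then by definition $\frac{u(t)x-x}{t}\to iAx$ as $t\to 0$, so $\frac{\|(u(t)-1)x\|}{t}\to \|Ax\|<\infty$, and in particular the $\limsup$ is finite. So the content of the lemma is the reverse inclusion, which I would package as: the domain of the generator coincides with the set of vectors for which the difference quotients stay bounded, even though for such $x$ we do not yet know the difference quotients converge.

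For the inclusion $\supseteq$, fix $x$ with $M:=\limsup_{t\to 0}\frac{\|(u(t)-1)x\|}{t}<\infty$, so there is $\eta>0$ with $\|(u(t)-1)x\|\le (M+1)|t|$ for $0<|t|<\eta$. The standard device is to show $x\in\dom A$ by testing against the (dense) domain, or better, by using the spectral theorem for $A$: write $u(t)=\int_{\bR} e^{it\lambda}\,dE(\lambda)$ where $E$ is the projection-valued spectral measure of $A$. Then
\[
\|(u(t)-1)x\|^2=\int_{\bR}|e^{it\lambda}-1|^2\,d\mu_x(\lambda),
\]
where $\mu_x(\cdot)=\langle E(\cdot)x,x\rangle$ is the scalar spectral measure. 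Recall $x\in\dom A$ iff $\int_{\bR}\lambda^2\,d\mu_x(\lambda)<\infty$. The key elementary estimate is that there is a constant $\kappa>0$ with $|e^{i\theta}-1|^2=2(1-\cos\theta)\ge \kappa\,\theta^2$ for all $\theta$ with, say, $|\theta|\le 1$; combined with $|e^{i\theta}-1|^2\le 4$ always, one gets $|e^{it\lambda}-1|^2\ge \kappa \min(t^2\lambda^2, c)$ for a suitable constant. Then for small fixed $t$,
\[
(M+1)^2 t^2 \ge \|(u(t)-1)x\|^2 \ge \kappa\!\!\int_{|\lambda|\le 1/|t|}\!\! t^2\lambda^2\,d\mu_x(\lambda),
\]
so $\int_{|\lambda|\le 1/|t|}\lambda^2\,d\mu_x(\lambda)\le (M+1)^2/\kappa$ uniformly in small $t$; letting $t\to 0$ and using monotone convergence gives $\int_{\bR}\lambda^2\,d\mu_x(\lambda)<\infty$, i.e.\ $x\in\dom A$.

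The main obstacle is essentially bookkeeping: getting the right two-sided comparison between $|e^{it\lambda}-1|^2$ and $t^2\lambda^2$ that is valid on the range $|t\lambda|\le 1$ while not losing control for large $|t\lambda|$, and arranging the truncation region $|\lambda|\le 1/|t|$ so that monotone convergence applies cleanly as $t\to 0^+$. An alternative route that avoids the spectral calculus entirely is the semigroup/resolvent argument: for $\lambda>0$ consider $x_\lambda:=\lambda\int_0^\infty e^{-\lambda s}u(s)x\,ds$, which always lies in $\dom A$, and show $Ax_\lambda$ stays bounded as $\lambda\to\infty$ using the hypothesis together with $\|u(s)x-x\|\le (M+1)s$ for small $s$ and a crude bound for large $s$; then $x_\lambda\to x$, and since $A$ is closed and $Ax_\lambda$ is bounded, a weak-compactness argument places $x$ in $\dom A$. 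I would present the spectral-theorem version as the primary proof since it is the most transparent, and the only real care needed is the elementary trigonometric inequality and the order of limits.
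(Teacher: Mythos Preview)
Your argument is correct, but the paper's proof is considerably shorter and avoids the spectral theorem entirely. The paper simply exploits selfadjointness via the adjoint characterization: for any $y\in\dom A$,
\[
|\langle x,Ay\rangle|
=\lim_{t\to 0}\Big|\Big\langle x,\tfrac{1}{t}(u(t)-1)y\Big\rangle\Big|
=\lim_{t\to 0}\Big|\Big\langle \tfrac{1}{t}(u(t)^*-1)x,y\Big\rangle\Big|
\le \Big(\limsup_{t\to 0}\tfrac{\|(u(t)-1)x\|}{|t|}\Big)\|y\|,
\]
so the linear functional $y\mapsto\langle x,Ay\rangle$ is bounded on the dense set $\dom A$, hence $x\in\dom A^{*}=\dom A$. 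This is essentially a one-line duality argument once one remembers that membership in $\dom A^{*}$ is exactly boundedness of $y\mapsto\langle x,Ay\rangle$. Your spectral approach and the resolvent-smoothing alternative both work, and the spectral computation even yields an explicit (though suboptimal) bound on $\|Ax\|$; however, for the purposes of this lemma the paper's route is more economical and uses only Stone's theorem and selfadjointness, nothing about spectral measures or trigonometric inequalities.
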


\begin{proof}
Let $x$ be such that $\limsup \frac{\|(u(t)-1)x\|}{t}<\infty$. 
For all $y$ from the dense subspace $\dom(A)$ we have
\begin{align*}
|\langle x,A y\rangle|
=\lim |\langle x, {\textstyle\frac{1}{t}}(u(t)-1)y\rangle|\leq\limsup |\langle \frac{1}{t}(u(t)^* - 1)x,y \rangle|\leq \limsup \frac{\|(u(t)-1)x\|}{t} \|y\|,    
\end{align*}
therefore $y\mapsto |\langle x,Ay\rangle|$ is bounded and we conclude that $x\in\dom A^* =  \dom A$. The reverse inclusion is obvious.
\end{proof}

\begin{lemma}\label{lem:bdd_prt}
Let $u$ and $v$ be one-parameter unitary groups on a Hilbert space $\cH$ with selfadjoint generators $A,B$. 
Assume further that there is a constant $K$ such that $\|u(t)-v(t)\|\leq K|t|$ for all $t$ in neighborhood of $0$. 
Then $\dom A=\dom B$ and $\|A-B\|\leq K$, that is, there exists a bounded operator $C \in \cB(\cH)$ such that $B = A + C$ and $\|C\|\leq K$. 
\end{lemma}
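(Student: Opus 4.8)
The plan is to deduce everything from \Cref{lem:dom}. First, the hypothesis $\|u(t)-v(t)\|\leq K|t|$ near $0$ immediately shows that the two domains coincide: for any $x$, the triangle inequality gives $\|(u(t)-1)x\| \leq \|(v(t)-1)x\| + \|u(t)-v(t)\|\,\|x\| \leq \|(v(t)-1)x\| + K|t|\,\|x\|$, and symmetrically with $u$ and $v$ swapped. Dividing by $|t|$ and taking $\limsup$ as $t\to 0$, the finiteness of one side is equivalent to the finiteness of the other, so by \Cref{lem:dom} we get $\dom A = \dom B$; call this common domain $\cD$.

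Next I would identify the perturbation on $\cD$. For $x\in\cD$ we have $\frac{1}{t}(u(t)-1)x \to iAx$ and $\frac{1}{t}(v(t)-1)x \to iBx$ as $t\to 0$, hence
\[
\|i(A-B)x\| = \lim_{t\to 0}\left\| {\textstyle\frac 1t}\bigl((u(t)-1) - (v(t)-1)\bigr)x \right\| = \lim_{t\to 0} \frac{\|(u(t)-v(t))x\|}{|t|} \leq K\|x\|.
\]
So $A-B$, a priori only defined on $\cD$, is bounded by $K$ there. Since $\cD$ is dense, $A-B$ extends uniquely to a bounded operator $C\in\cB(\cH)$ with $\|C\|\leq K$, and by construction $B = A + C$ on $\cD = \dom B$. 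This gives the displayed conclusion and the interpretation ``$\|A-B\|\leq K$'' as the norm of this bounded extension.

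I expect the only genuine subtlety to be bookkeeping about domains: one should note that $A-B$ is well-defined as a densely-defined operator precisely because $\dom A = \dom B$ (otherwise $Ax - Bx$ would not make sense), and that after extending $C$ by density the identity $B = A+C$ holds as an equality of operators with the same domain $\cD$ — it is not merely an inclusion. Everything else is a routine $\limsup$/limit estimate. An alternative route, which I would mention only as a remark, is to use the integrated form $u(t)x - v(t)x = \int_0^t u(t-s)\,i(A-B)\,v(s)x\,ds$ for $x\in\cD$, but this requires more care with the integral and is not needed, since the direct differentiation argument above is cleaner.
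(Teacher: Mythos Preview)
Your proof is correct and follows essentially the same route as the paper: use \Cref{lem:dom} together with the triangle inequality to get $\dom A=\dom B$, then bound $\|(A-B)x\|$ on this common domain by the difference quotient $\tfrac{1}{|t|}\|(u(t)-v(t))x\|\leq K\|x\|$ and extend by density. The only slip is a sign: you define $C$ as the extension of $A-B$, which gives $B=A-C$ rather than $B=A+C$; this is harmless for the norm bound but should be fixed for consistency with the statement.
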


\begin{proof}
Let $x \in \cH$. 
By \Cref{lem:dom}, 
\[
\limsup \frac{\|(v(t)-1)x\|}{t}\leq \limsup \frac{\|(u(t)-1)x\|}{t}+ K\|x\|
\]
shows that $\dom B\subset\dom A$ and the reverse inclusion follows analogously. Since the domains agree, $C:=A-B$ is a densely defined operator. For $x\in\dom A$, we have
\[
\|Cx\|\leq\limsup \frac{\|(u(t)-v(t))x\|}{t} \leq K\|x\|.
\]
But since $\dom A$ is dense in $\cH$, the operator $C$ extends to a bounded operator on $\cH$ of norm at most $K$. 
\end{proof}

We now combine the results of this section to show that unitary groups that are at a finite dilation distance from each other are, in fact, bounded perturbations of one another. 
\begin{theorem}\label{thm:bdd_pert}
Let $u$ and $v$ be two $d$-tuples of one-parameter unitary groups. 
If $d_{\mathrm D}(u,v)\leq \delta <\infty$, then there are realizations as $u\sim U$ and $v\sim V$ of infinite multiplicity on the same Hilbert space $\cH$, and there are selfadjoint operators $A_1, \ldots, A_d$ and $B_1, \ldots, B_d$ in $\cH$  with $\dom A_k = \dom B_k$ such that $U_k(t)=\exp(itA_k)$ and $V_k(t) = \exp(itB_k)$ and such that 
\[
\|B_k - A_k\| \leq 5\sqrt{2}\, \delta^{1/2}
\]
for all $k=1,\ldots, d$. 
 \end{theorem}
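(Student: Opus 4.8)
The plan is to simply combine the two main technical results of this section: \Cref{thm:DtoHR}, which upgrades a finite dilation distance into a norm estimate on equivalent representations, and \Cref{lem:bdd_prt}, which converts such a norm estimate into a bounded perturbation of generators. First I would apply \Cref{thm:DtoHR} to $u$ and $v$: since $d_{\mathrm D}(u,v)\leq\delta<\infty$, there is a Hilbert space $\cL$ and faithful normal representations $\pi\colon \cB(\cH_1)\to\cB(\cL)$ and $\rho\colon\cB(\cH_2)\to\cB(\cL)$ of infinite multiplicity such that $\|\pi(u_k(t))-\rho(v_k(t))\|\leq 5\sqrt2\,|t|\,\delta^{1/2}$ for all $k$ and all $t$ in a neighborhood of $0$. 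Setting $\cH:=\cL$, $U_k(t):=\pi(u_k(t))$ and $V_k(t):=\rho(v_k(t))$ gives realizations $u\sim U$ and $v\sim V$ of infinite multiplicity on the same Hilbert space.

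Next I would observe that each $U_k=(U_k(t))_{t\in\bR}$ and each $V_k=(V_k(t))_{t\in\bR}$ is again a strongly continuous one-parameter unitary group on $\cH$: the algebraic group law is preserved by the (unital, $*$-preserving) normal representations $\pi,\rho$, and strong continuity passes through a normal representation since such a representation is (up to unitary equivalence) an ampliation $a\mapsto W^*(a\otimes 1)W$, under which strong convergence is preserved. Hence by Stone's theorem there are selfadjoint operators $A_k,B_k$ on $\cH$ with $U_k(t)=\exp(itA_k)$ and $V_k(t)=\exp(itB_k)$.

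Finally, for each $k$ the norm bound $\|U_k(t)-V_k(t)\|\leq 5\sqrt2\,|t|\,\delta^{1/2}$ holds for all $t$ in a neighborhood of $0$, so \Cref{lem:bdd_prt} (applied with $K=5\sqrt2\,\delta^{1/2}$) yields $\dom A_k=\dom B_k$ and $\|B_k-A_k\|\leq 5\sqrt2\,\delta^{1/2}$, i.e.\ $B_k=A_k+C_k$ with $C_k\in\cB(\cH)$ and $\|C_k\|\leq 5\sqrt2\,\delta^{1/2}$. This is exactly the assertion.

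There is essentially no obstacle here beyond bookkeeping, since all the work has been done in \Cref{thm:DtoHR} and \Cref{lem:bdd_prt}; the only point requiring a word of care is the verification that $\pi\circ u_k$ and $\rho\circ v_k$ are genuine strongly continuous unitary groups on $\cL$ (so that Stone's theorem applies), which follows from the structure theory of normal representations of $\cB(\cH)$ recalled after the definition of equivalence. One should also note that \Cref{lem:bdd_prt} is stated for $d=1$; we apply it coordinatewise for each $k=1,\dots,d$, and the resulting common Hilbert space $\cH=\cL$ and realizations $U,V$ are the same for all $k$, so no compatibility issue arises.
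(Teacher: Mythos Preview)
Your proposal is correct and matches the paper's proof essentially verbatim: apply \Cref{thm:DtoHR} to get realizations $U,V$ on a common space with $\|U_k(t)-V_k(t)\|\leq 5\sqrt{2}\,|t|\,\delta^{1/2}$, then apply \Cref{lem:bdd_prt} coordinatewise. Your additional remarks (that $\pi\circ u_k$, $\rho\circ v_k$ remain strongly continuous unitary groups, and that the lemma is applied for each $k$ separately) are valid elaborations the paper leaves implicit.
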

\begin{proof}
By \Cref{thm:DtoHR}, there are realizations $u\sim U$ and $v \sim V$ such that $\|V_k(t) - U_k(t)\|\leq K|t|$ for all $k=1, \ldots, d$, where $K = 5\sqrt{2}\, \delta^{1/2}$.
Applying \Cref{lem:bdd_prt} for all $k=1,\ldots, d$ we obtain the result. 
\end{proof}

\begin{remark}
The converse fails in an obvious manner: in $\mathbb C$, the unitary groups $e^{iat},e^{ibt}$ with distinct generators $a,b\in\mathbb R$ have infinite dilation distance. The notion of dilation distance works poorly when the matrix ranges do not contain open subsets. 
\end{remark} 

\section{Bounded perturbation of the Heisenberg commutation relations}

A pair of selfadjoint (unbounded) operators $P$ and $Q$ on a Hilbert space is said to satisfy the {\em Heisenberg commutation relation} (or sometimes the {\em Weyl form} of the canonical commutation relations) if the corresponding unitary groups $u(t) = e^{itP}$ and $v(t) = e^{itQ}$ satisfy
\be\label{eq:q-commuting}
u(s)v(t) = e^{ist} v(t)u(s),
\ee
a condition which is customarily interpreted as 
\[
[P,Q] = -i I. 
\]
It is known \cite[Theorem 14.8]{HallBook} that that there exists a unique (up to unitary equivalence) representation of the Heisenberg commutation relation that is irreducible, in the sense that the commutant of $C^*(u(s),v(t) : s,t \in \bR)$ is trivial, namely: the essentially unique representation on $L^2(\bR)$ in which $P = -i \frac{\mathrm d}{\mathrm dx}$ is the momentum operator and $Q = M_x$ (where $(M_x f)(x) = xf(x)$) is the position operator.
Haagerup and R\o rdam proved that the infinite ampliations of $P$ and $Q$ are a bounded perturbation of a pair of selfadjoint operators $P_0$ and $Q_0$ that strongly commute, in the sense that the one-parameter groups that they generate satisfy $e^{isP_0} e^{itQ_0} =  e^{isQ_0} e^{itP_0}$ for all $s,t \in \bR$ \cite[Theorem 3.1]{HR95}. 
A higher dimensional generalization of this was obtained by Gao \cite{Gao18}. 
In this section, using our dilation framework, we recover this result of Gao.


\begin{definition}
Given a real antisymmetric $d\times d$ matrix $\Theta=(\theta_{k,\ell})$, we say that a tuple $u = (u_1, \ldots, u_d)$ of one-parameter unitary groups {\em commutes according to $\Theta$} if
\be\label{eq:Theta_commuting}
u_\ell(s) u_k(t)=e^{i\theta_{k,\ell}st}u_k(t) u_\ell(s) \,\, , \,\, k,\ell = 1, \ldots, d \,\, , \,\, s,t \in \bR. 
\ee
\end{definition}

A concrete representation of $\Theta$-commuting one-parameter unitary groups is given by the Weyl unitaries (see, e.g., \cite{Par12}), which have been observed in \cite{GS20} to behave nicely under compressions. 
For a Hilbert space $H$ let
\[
\Gamma(H):=\bigoplus_{k=0}^{\infty} H^{\otimes_s k}
\]
be {\em the symmetric Fock space} over $H$.
The {\em exponential vectors} $e(x):=\sum_{k=0}^{\infty}\frac{1}{\sqrt{k!}} x^{\otimes k}, x\in H$ form a linearly independent and total subset of $\Gamma(H)$.
Clearly, $\langle e(x),e(y)\rangle = e^{\langle x,y\rangle}$ for all $x,y\in H$. 
For $z\in H$ the {\em Weyl operator} $W(z)\in \cB(\Gamma(H))$ is defined by
\[
W(z) e(x)=e(z+x) \exp\left(-\frac{\|z\|^2}{2} - \langle x,z\rangle \right)
\]
for all exponential vectors $e(x)$.
Simple calculations show that the Weyl operators are unitary and that $W(z),W(y)$ commute up to the phase factor $e^{2i \im \langle y,z\rangle}$, that is
\[
W(y) W(z) = e^{2i \im \langle y,z\rangle} W(z) W(y).
\]


\begin{lemma}\label{lem:exitThetacom}
Let $\Theta=(\theta_{k,\ell})$ be a real and antisymmetric $d\times d$ matrix. 
\begin{enumerate}
\item There exists a Hilbert space $H$, $\dim H=d$ and linearly independent $x_1,\ldots, x_d\in H$ such that
\[
2\im\langle x_\ell,x_k\rangle = \theta_{k,\ell}.
\]
\item The families $u_k(t) = W(t x_k)$ for $k=1, \ldots, d$ and $t \in \bR$ give rise to a $d$-tuple of strongly continuous one-parameter unitary groups that satisfiy \eqref{eq:Theta_commuting}. 
\end{enumerate}
\end{lemma}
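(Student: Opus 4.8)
For part (1), the plan is to build the Hilbert space and vectors by hand from the matrix $\Theta$. Consider the real bilinear form on $\bR^d$ given by $\Theta$ itself; since $\Theta$ is antisymmetric, I would like to realize it as $\theta_{k,\ell} = 2\im\langle x_\ell, x_k\rangle$ for suitable vectors in a complex inner product space. The cleanest route is to put a complex inner product on $\bC^d$ (or on a larger space) whose imaginary part, restricted to the standard basis, reproduces $\tfrac12\Theta$ up to sign. Concretely, I would take $H = \bC^d$ with inner product $\langle z, w\rangle := \sum_k z_k \overline{w_k} + \tfrac{i}{2}\sum_{k,\ell} \theta_{k,\ell} z_\ell \overline{w_k}$ if this is positive definite after rescaling; since the Hermitian form $z \mapsto \langle z,z\rangle$ has real part $\|z\|^2$ (the Euclidean norm) and the perturbation term is bounded, for a sufficiently large multiple $N$ the form $N\|z\|^2 + \tfrac{i}{2}\sum \theta_{k,\ell}z_\ell\overline{z_k}$ is positive definite and Hermitian, hence an inner product; then $x_k := \sqrt{N}\, e_k$ (standard basis vectors, rescaled) are linearly independent and satisfy $2\im\langle x_\ell, x_k\rangle = \theta_{k,\ell}$ by direct computation, using $\im\langle e_\ell, e_k\rangle_{\text{new}} = \tfrac12(\theta_{k,\ell} - \theta_{\ell,k})/2 = \tfrac12\theta_{k,\ell}$ wait — I should just be careful to symmetrize correctly so that the diagonal vanishes (which it does since $\theta_{k,k}=0$) and the off-diagonal entries come out right; linear independence is automatic since the $x_k$ are nonzero multiples of an orthogonal-in-$\bR^d$ basis. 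The main thing to check here is positive-definiteness of the constructed form, which is routine once $N$ is taken large enough.

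For part (2), I would first verify that each $u_k(t) = W(tx_k)$ is a one-parameter group. The defining action on exponential vectors gives $W(sx_k)W(tx_k)e(y) = W((s+t)x_k)e(y) \cdot \exp(\text{phase})$, and since $\im\langle sx_k, tx_k\rangle = st\,\im\langle x_k,x_k\rangle = 0$, the cocycle phase vanishes and $W(sx_k)W(tx_k) = W((s+t)x_k)$; also $W(0) = I$, so $t \mapsto u_k(t)$ is a genuine homomorphism $\bR \to \cU(\Gamma(H))$. Strong continuity follows from the explicit formula: on the total set of exponential vectors, $t \mapsto W(tx_k)e(y)$ is norm-continuous (the map $z \mapsto W(z)e(y)$ is continuous because $z \mapsto e(z+y)$ is and the scalar prefactor is continuous), and since $\{W(tx_k)\}$ are uniformly bounded (unitaries), strong continuity on a total set upgrades to strong continuity everywhere by a standard $\varepsilon/3$ argument. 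Finally, the $\Theta$-commutation relation \eqref{eq:Theta_commuting} is read off from the Weyl commutation relation stated just before the lemma: $W(y)W(z) = e^{2i\im\langle y,z\rangle}W(z)W(y)$, applied with $y = sx_\ell$ and $z = tx_k$, gives $u_\ell(s)u_k(t) = e^{2ist\,\im\langle x_\ell, x_k\rangle}u_k(t)u_\ell(s) = e^{i\theta_{k,\ell}st}u_k(t)u_\ell(s)$, using part (1).

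I do not anticipate a serious obstacle; the only place requiring a little care is the construction in part (1) — ensuring the perturbed Hermitian form is actually an inner product (positive definite), and bookkeeping the factor of $2$ and the sign so that $2\im\langle x_\ell, x_k\rangle$ equals $\theta_{k,\ell}$ and not $\theta_{\ell,k}$ or $-\theta_{k,\ell}$. Everything in part (2) is a direct consequence of the stated properties of Weyl operators together with part (1), with strong continuity being the only mildly technical point, handled by the standard reduction to a total set of uniformly bounded operators.
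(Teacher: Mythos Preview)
Your plan is essentially correct and, for part~(2), matches the paper's argument (the Weyl commutation relation gives \eqref{eq:Theta_commuting} immediately, and the group law follows since $\im\langle x_k,x_k\rangle=0$). The paper establishes strong continuity by noting the groups are strongly measurable, hence strongly continuous; your direct argument via continuity on the total set of exponential vectors together with uniform boundedness is an equally standard alternative.

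For part~(1) the paper simply cites \cite[Lemma~5.2]{GPSS21}, whereas you give an explicit construction. Your idea is right --- the matrix $NI + \tfrac{i}{2}\Theta$ is Hermitian (since $\Theta$ is real antisymmetric) and positive definite for $N>\tfrac12\|\Theta\|$, hence it is the Gram matrix of $d$ linearly independent vectors in a $d$-dimensional Hilbert space --- but your scaling is off. If you equip $\bC^d$ with the sesquilinear form $\langle z,w\rangle_{\mathrm{new}} := N\sum_k z_k\overline{w_k} + \tfrac{i}{2}\sum_{k,\ell}\theta_{k,\ell}z_\ell\overline{w_k}$ and take $x_k = e_k$ (\emph{not} $\sqrt{N}\,e_k$), then $\langle e_\ell,e_k\rangle_{\mathrm{new}} = N\delta_{k,\ell} + \tfrac{i}{2}\theta_{k,\ell}$ and so $2\im\langle e_\ell,e_k\rangle_{\mathrm{new}} = \theta_{k,\ell}$ as desired. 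Scaling the vectors by $\sqrt{N}$ would multiply the imaginary part by $N$ as well, spoiling the identity. You flagged that the bookkeeping needed care; this is the only place it actually bites.
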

\begin{proof}
The first part follows directly from \cite[Lemma 5.2]{GPSS21}. 
A routine argument shows that $u_1, \ldots, u_d$ are strongly measurable hence strongly continuous unitary groups and that
\[
u_\ell(s) u_k(t) = W(s x_\ell) W(t x_k) = e^{i  2 \im \langle s x_\ell, tx_k \rangle} W(t x_k) W(t x_\ell) = e^{i\theta_{k,\ell}st}u_k(t) u_\ell(s),
\]
i.e., \eqref{eq:Theta_commuting} is satisfied. 
\end{proof}

\begin{lemma}\label{lem:vectors-xyz}
Let $\Theta=(\theta_{k,\ell})$, $\Theta'=(\theta'_{k,\ell})$ be real and antisymmetric $d\times d$ matrices, and let $x_1, \ldots, x_d$ be as in \Cref{lem:exitThetacom}. 
Then there exist a Hilbert space $K\supset H$, $\dim K = 2d$ and linearly independent vectors $z_1,\ldots, z_d\in K$, such that $x_k:=p_H z_k$ ($k=1, \ldots, d$, $p_H$ the projection onto $H$) and such that with $y_k:=p_{H^{\perp}}z_k$ the following conditions are satisfied:
\begin{center}
  \begin{enumerate*}[label=\textnormal{
    }]
  \item $2\im\langle z_\ell,z_k\rangle = \theta_{k,\ell}'$;
  \item $2\im\langle x_\ell,x_k\rangle = \theta_{k,\ell}$;
  \item $\|y_k\|^2=\frac{1}{2}\|\Theta'-\Theta\|$.
  \end{enumerate*}
\end{center}
\end{lemma}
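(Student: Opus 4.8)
\textbf{Plan for the proof of \Cref{lem:vectors-xyz}.}
The strategy is to build $K$ as an orthogonal direct sum $K = H \oplus H'$, where $H'$ is an auxiliary Hilbert space of dimension $d$, and to set $z_k = x_k + y_k$ with $y_k \in H'$ to be determined. Then conditions (i) and (ii) are already linked: since $x_k \perp y_\ell$, we have $\langle z_\ell, z_k\rangle = \langle x_\ell, x_k\rangle + \langle y_\ell, y_k\rangle$, so $2\im\langle z_\ell, z_k\rangle = \theta_{k,\ell} + 2\im\langle y_\ell, y_k\rangle$. Hence (i) holds precisely when $2\im\langle y_\ell, y_k\rangle = \theta'_{k,\ell} - \theta_{k,\ell}$, i.e.\ when $(y_1,\ldots,y_d)$ realizes the antisymmetric matrix $\Theta' - \Theta$ in the sense of \Cref{lem:exitThetacom}(1). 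So (ii) is automatic from the choice of the $x_k$, and (i) reduces to finding appropriate $y_k$'s inside $H'$.

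The first main step is therefore to invoke \Cref{lem:exitThetacom}(1) applied to the matrix $\Theta' - \Theta$ to obtain a $d$-dimensional Hilbert space $H'$ and vectors $\tilde y_1, \ldots, \tilde y_d \in H'$ with $2\im\langle \tilde y_\ell, \tilde y_k\rangle = \theta'_{k,\ell} - \theta_{k,\ell}$. The second step is to normalize the norms to satisfy (iii): I would look for a suitable adjustment (a Gram–Schmidt-type or scaling argument) producing $y_k$ with $2\im\langle y_\ell, y_k\rangle$ unchanged but $\|y_k\|^2 = \tfrac12\|\Theta'-\Theta\|$ for every $k$. The cleanest route is probably to trace through the construction behind \cite[Lemma 5.2]{GPSS21}: that lemma typically builds the vectors so that their Gram matrix is $\tfrac12(\|M\| I + i M)$ for the input matrix $M$ (which is positive semidefinite since $\|M\|I \succeq \pm iM$ for antisymmetric real $M$, the eigenvalues of $iM$ being real and bounded by $\|M\|$ in absolute value). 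With $M = \Theta' - \Theta$ this Gram matrix has diagonal entries exactly $\tfrac12\|\Theta'-\Theta\|$, giving (iii) for free, and off-diagonal imaginary parts $\tfrac12(\theta'_{k,\ell} - \theta_{k,\ell})$, giving (i). Linear independence of the $z_k$ then follows from linear independence of the $x_k$ (already assumed) together with $z_k \mapsto p_H z_k = x_k$ being well-defined, or more robustly from the Gram matrix of the $y_k$ being nonsingular (equivalently $\|\Theta'-\Theta\|I + i(\Theta'-\Theta)$ invertible, which holds unless $\Theta' = \Theta$; the degenerate case $\Theta'=\Theta$ is handled trivially by taking $z_k = x_k$ and $y_k = 0$, or one slightly enlarges $\|\cdot\|$).

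Finally, set $K = H \oplus H'$, so $\dim K = 2d$, and $z_k = x_k \oplus y_k$; then $p_H z_k = x_k$ and $p_{H^\perp} z_k = y_k$ by construction, (ii) is inherited from \Cref{lem:exitThetacom}(1) for the $x_k$, (i) and (iii) hold by the computation above, and linear independence of $\{z_k\}$ follows as indicated. The main obstacle I anticipate is purely bookkeeping: making sure the normalization in step two can be achieved \emph{simultaneously} with preserving all the prescribed imaginary inner products, which is why I would lean on the explicit Gram-matrix form $\tfrac12(\|M\|I + iM)$ rather than trying to post-process an arbitrary realization; everything else is routine linear algebra on positive semidefinite Gram matrices.
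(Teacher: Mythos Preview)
Your proposal is correct and is essentially the argument the paper has in mind: the paper's proof simply cites \cite[Lemma 5.3]{GPSS21} and says ``the same proof works,'' and your reconstruction---taking $K=H\oplus H'$, $z_k=x_k\oplus y_k$, and choosing the $y_k$ so that their Gram matrix is $\tfrac12(\|M\|I+iM)$ with $M=\Theta'-\Theta$---is precisely that construction. Your handling of linear independence (via $p_H z_k=x_k$) and of the degenerate case $\Theta'=\Theta$ is fine as well.
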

\begin{proof}
This is a slight modification of the statement of \cite[Lemma 5.3]{GPSS21}. 
The same proof works. 
\end{proof}


\begin{theorem}\label{thm:bounded_perturbation}
Let $d = 2n$ and let $\Theta$ be a real nonsingular antisymmetric $d \times d$ matrix.\footnote{\label{fn:ev-in-pairs}Note that a real antisymmetric $d\times d$-matrix $\Theta$ with odd $d$ can never be nonsingular; indeed, since $i\Theta$ is hermitian, the nonzero eigenvalues of $\Theta$ must be purely imaginary, and since $\Theta$ is real they must come in pairs $\lambda,\overline\lambda\in i\mathbb R$. 
Thus, if $d$ is odd, $0$ is necessarily an eigenvalue.}
Let $P_1, \ldots, P_d$ be the generators of one-parameter unitary groups $u_1, \ldots, u_d$ that commute according to $\Theta$. 
For any real antisymmetric $d \times d$ matrix $\Theta'$, the infinite ampliation $P^\infty:=P\otimes 1_{\ell^2(\bN)}$ of $P$ is a bounded perturbation of a $d$-tuple $Q$ of selfadjoint operators that generate $d$ unitary groups that commute according to $\Theta'$ such that
\[
\|P_k^{\infty} - Q_k\| \leq {\textstyle\frac{5}{\sqrt{2}}} \|\Theta - \Theta'\|^{1/2}
\]
for all $k=1, \ldots, d$. If $P$ is of infinite multiplicity, $P^\infty$ can be replaced by $P$.
\end{theorem}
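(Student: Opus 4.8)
The plan is to realize, up to equivalence, the given $\Theta$-commuting tuple by a Weyl system on a symmetric Fock space, to place it within dilation distance $\tfrac14\|\Theta-\Theta'\|$ of a $\Theta'$-commuting Weyl system by a Fock-space compression argument, and then to feed this into \Cref{thm:bdd_pert}. Concretely, I would first apply \Cref{lem:vectors-xyz} to the pair $(\Theta,\Theta')$, obtaining $K\supseteq H$ with linearly independent $z_1,\dots,z_d\in K$, $x_k=p_Hz_k$ and $y_k=p_{H^\perp}z_k$ satisfying $2\im\langle x_\ell,x_k\rangle=\theta_{k,\ell}$, $2\im\langle z_\ell,z_k\rangle=\theta'_{k,\ell}$ and $\|y_k\|^2=\tfrac12\|\Theta-\Theta'\|$ for every $k$. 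By \Cref{lem:exitThetacom} the families $u^{\Theta}_k(t):=W(tx_k)$ on $\Gamma(H)$ and $v_k(t):=W(tz_k)$ on $\Gamma(K)$ are strongly continuous one-parameter unitary groups commuting according to $\Theta$ and $\Theta'$ respectively. Since $\Theta$ is nonsingular, the Stone--von Neumann uniqueness theorem (see, e.g., \cite{Par12}) shows that every $\Theta$-commuting $d$-tuple of unitary groups has infinite ampliation unitarily equivalent to that of $u^{\Theta}$; in particular $u\sim u^{\Theta}$, so we may and do assume $u=u^{\Theta}$, i.e.\ $u_k(t)=W(tx_k)$ on $\Gamma(H)$ with generator $P_k$ — at the very end the conclusion for this $u$ transfers to the original one by conjugating with the unitary implementing $u^\infty\sim_{\mathrm{ue}}(u^{\Theta})^\infty$.

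Next I would establish the two dilation estimates with constant $c_0:=\tfrac14\|\Theta-\Theta'\|$. Using $\Gamma(K)=\Gamma(H)\otimes\Gamma(H^\perp)$, the factorization $W(tz_k)=W(tx_k)\otimes W(ty_k)$, and $\langle\Omega,W(ty_k)\Omega\rangle=e^{-t^2\|y_k\|^2/2}$ (with $\Omega$ the vacuum, as follows from the defining formula for $W$; compare \cite{GS20}), compression of $\Gamma(K)$ onto $\Gamma(H)$ gives
\[
P_{\Gamma(H)}\,v_k(t)\big|_{\Gamma(H)}=e^{-t^2\|y_k\|^2/2}\,u_k(t),
\]
so with $C(t):=e^{t^2\|y_k\|^2/2}=e^{\frac14 t^2\|\Theta-\Theta'\|}\le e^{t^2c_0}$ — one and the same for all $k$ since $\|y_k\|^2$ is — we get $u_k(t)=P_{\Gamma(H)}\,C(t)\,v_k(t)\big|_{\Gamma(H)}$ and hence $u\prec_{c_0}v$. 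For the reverse direction I would apply \Cref{lem:vectors-xyz} once more, now to $(\Theta',\Theta)$ with the linearly independent vectors $z_1,\dots,z_d$ in the role of the $x_k$'s, obtaining a Hilbert space $L\supseteq\ol{\spn}\{z_k\}$ and linearly independent $w_k\in L$ with $z_k=p\,w_k$, $2\im\langle w_\ell,w_k\rangle=\theta_{k,\ell}$ and $\|p^{\perp}w_k\|^2=\tfrac12\|\Theta-\Theta'\|$. Then $U_k(t):=W(tw_k)$ on $\Gamma(L)$ is $\Theta$-commuting, hence $U\sim u$ by Stone--von Neumann, and the same compression computation (now of $\Gamma(L)$ onto $\Gamma(\ol{\spn}\{z_k\})$, noting that $v\sim W(\cdot z)\big|_{\Gamma(\ol{\spn}\{z_k\})}$ by ampliation) yields $v\prec_{c_0}u$. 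Therefore $d_{\mathrm D}(u,v)\le c_0=\tfrac14\|\Theta-\Theta'\|$.

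It remains to invoke \Cref{thm:bdd_pert}, which produces realizations $u\sim\tilde U$, $v\sim\tilde V$ of infinite multiplicity on a common Hilbert space together with selfadjoint generators $A_k$ of $\tilde U_k$ and $B_k$ of $\tilde V_k$ satisfying $\dom A_k=\dom B_k$ and $\|B_k-A_k\|\le 5\sqrt2\,c_0^{1/2}=\tfrac{5}{\sqrt2}\|\Theta-\Theta'\|^{1/2}$. Since $\tilde U$ has infinite multiplicity, $u\sim\tilde U$ forces $u^\infty\sim_{\mathrm{ue}}\tilde U$, so there is a unitary $W$ with $WP_k^\infty W^*=A_k$. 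Putting $Q_k:=W^*B_kW$ then gives selfadjoint operators with $\dom Q_k=W^*(\dom B_k)=W^*(\dom A_k)=\dom P_k^\infty$, with $Q_k-P_k^\infty=W^*(B_k-A_k)W$ bounded of norm at most $\tfrac{5}{\sqrt2}\|\Theta-\Theta'\|^{1/2}$, and with $e^{itQ_k}=W^*\tilde V_k(t)W$; since $\tilde V\sim v$ and the commutation relation \eqref{eq:Theta_commuting} is preserved under $\sim$, the groups $e^{itQ_k}$ commute according to $\Theta'$. This is the required tuple $Q$ (and transporting back via the unitary from the first paragraph handles the original $u$). Finally, if $P$, equivalently $u$, already has infinite multiplicity, then $u^\infty\sim_{\mathrm{ue}}u$, so $\tilde U\sim_{\mathrm{ue}}u$ and the same argument produces $Q$ as a bounded perturbation of $P$ itself.

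I expect the genuine difficulty to lie in the two-sidedness of the dilation distance: the compression $W(\cdot z)\rightsquigarrow W(\cdot x)$ is immediate, whereas the reverse dilation forces one to build a strictly larger $\Theta$-commuting Weyl system over $L$ and then recognize it as equivalent to $u$ — and this is precisely the step where nonsingularity of $\Theta$ is used, via Stone--von Neumann. \Cref{lem:vectors-xyz} is arranged exactly so that both applications deliver the same constant $\tfrac14\|\Theta-\Theta'\|$ (recall $\|\Theta-\Theta'\|=\|\Theta'-\Theta\|$). Everything else — bookkeeping with infinite ampliations and transporting selfadjoint generators through unitaries via the equivalence $u\sim U\iff u^\infty\sim_{\mathrm{ue}}U^\infty$ together with the infinite-multiplicity clause — is routine.
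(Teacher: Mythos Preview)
Your proposal is correct and follows essentially the same route as the paper: construct $\Theta$- and $\Theta'$-commuting Weyl systems via \Cref{lem:vectors-xyz}, obtain both dilation inequalities $u\prec_{c_0}v$ and $v\prec_{c_0}u$ with $c_0=\tfrac14\|\Theta-\Theta'\|$ by Fock-space compression (the reverse direction requiring a second application of \Cref{lem:vectors-xyz} together with Stone--von~Neumann to identify the resulting $\Theta$-system with $u$), and then invoke \Cref{thm:bdd_pert}. You are in fact more explicit than the paper about the minor bookkeeping points---restricting to $\ol{\spn}\{z_k\}$ before the second application of \Cref{lem:vectors-xyz}, and transporting the generators through the unitary implementing $u^\infty\sim_{\mathrm{ue}}\tilde U$---but the argument is the same.
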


\begin{proof}
Let $x_k,y_k,z_k$ denote the vectors from \Cref{lem:vectors-xyz}, and form the unitary groups 
\[
u_\Theta = (W(tx_1),\ldots,W(tx_d)) \quad \textrm{ and }  \quad u_{\Theta'} = (W(tz_1),\ldots,W(tz_d)), 
\]
which commute according to $\Theta$ and $\Theta'$, respectively (\Cref{lem:exitThetacom}). 
From
\[
p_{\Gamma(H)}W(tz_k)|_{\Gamma(H)}=e^{\frac{-\|p^\perp tz_k\|^2}{2}}W(p_Htz_k)=e^{-t^2\frac{\|\Theta-\Theta'\|}{4}}W(tx_k)
\]
we have $u_\Theta \prec_c u_{\Theta'}$ for $c = \frac{\|\Theta-\Theta'\|}{4}$. 

Now we apply \Cref{lem:vectors-xyz} with $z_1, \ldots, z_d$ in place of $x_1, \ldots, x_d$ and the roles of $\Theta$ and $\Theta'$ reversed, to find another $d$-tuple $v_\Theta = (v_1, \ldots, v_d)$ of unitary groups that commute according to $\Theta$ and satisfy $u_{\Theta'} \prec_c v_\Theta$ with $c = \frac{\|\Theta-\Theta'\|}{4}$. 
But by the Stone-von Neumann theorem\footnote{More precisely, its simple consequence that there is a unique irreducible representation of the algebra generated by $d$ unitary groups that commute according to a nonsingular $\Theta$, see \cite[Proposition 3.1]{Gao18} and, consequently, every infinite multiplicity representation is unitarily equivalent to the infinite ampliation thereof. The most common formulation of the Stone-von Neumann Theorem, see e.g.\ \cite[Theorem 14.8]{HallBook}, only refers to the special case $\Theta=
  \begin{pmatrix}
    0& -I\\
    I&0
  \end{pmatrix}$.}, $u_\Theta \sim v_\Theta$ and so $u_{\Theta'} \prec_c u_\Theta$. 
It follows that $d_{\mathrm D}(u,v) \leq \frac{\|\Theta-\Theta'\|}{4} < \infty$. 
By \Cref{thm:bdd_pert} we conclude that there exist strongly continuous $d$-tuples $U \sim u_\Theta$ and $V \sim u_{\Theta'}$ of infinite multiplicity whose generators are perturbations of one another by bounded operators of norm less than or equal to $\frac{5}{\sqrt{2}} \|\Theta - \Theta'\|^{1/2}$. 
By the Stone-von Neumann theorem again, the generators of $U$ are (unitarily equivalent to) an infinite ampliation of $P_1, \ldots, P_d$.

If $P$ is of infinite multiplicity, then $P\sim_{\mathrm{ue}}P^{\infty}$.
\end{proof}

In particular, the infinite ampliation of $P$ is a bounded perturbation of $d$ strongly commuting selfadjoint operators. 

Note that the above result was obtained by Gao \cite[Theorem 3.2]{Gao18} with the bound 
\[
\|P_k - Q_k\| \leq 9(d-1)\max_\ell |\theta_{k,\ell}-\theta'_{k,\ell}|^{1/2} 
\]
(here and for the rest of the discussion we assume that $P$ is of infinite multiplicity).
The bounds are not directly comparable because in Gao's result the bound depends on $k$. However, if $\max_\ell |\theta_{k,\ell}-\theta'_{k,\ell}|$ is independent of $k$ (for example if $\theta_{k,\ell}=\theta$ for all $k<\ell$), or if we only compare our result with
\[\max_k\|P_k-Q_k\|=9(d-1)\max_{k,\ell}|\theta_{k,\ell}-\theta'_{k,\ell}|^{1/2},\] our bound is significantly better for large $d$. Indeed, for the real antisymmetric matrix  $\Gamma:=\Theta-\Theta'$, whose nonzero eigenvalues always come in pairs $\lambda, \overline\lambda\in i\mathbb R$ (cf.\ Footnote \ref{fn:ev-in-pairs}),
\[\|\Theta-\Theta'\|^2=\|\Gamma\|^2=\max_{\lambda\in\sigma(\Gamma)} |\lambda|^2\leq \frac{1}{2} \sum_{\lambda\in\sigma(\Gamma)} |\lambda|^2 = \frac{\|\Gamma\|_{\mathrm{HS}}^2}{2}\leq \frac{d(d-1)}{2}\max_{k,\ell}  |\theta_{k,\ell}-\theta'_{k,\ell}|^2   \]
and we can thus achieve by \Cref{thm:bounded_perturbation}
\[\|P_k-Q_k\|\leq \frac{5}{\sqrt2} \sqrt[4]{\frac{d(d-1)}{2}}  \max_{k,\ell}  |\theta_{k,\ell}-\theta'_{k,\ell}|^{1/2}< 3 \sqrt{d}\max_{k,\ell}  |\theta_{k,\ell}-\theta'_{k,\ell}|^{1/2}\]
for all $k$. The difference is even more striking in the special case $\Theta=\theta J$ and 
$\Theta'=
\theta' J$ with 
$J=
\begin{pmatrix}
  0& - I\\ I&0
\end{pmatrix}$ and $\theta, \theta' \in\bR$, where we achieve
\[\|P_k-Q_k\|\leq \frac{5}{\sqrt{2}}|\theta-\theta'|^{1/2}\]
for all $k$, which does not depend on $d$ at all.

Letting $d = 2$ and $\theta_{12} = - \theta_{21} = 1$ we recover \cite[Theorem 3.1]{HR95} with the somewhat better bound $\frac{5}{\sqrt{2}}\approx 3.54$ (in the remark on page 637 of their paper, Haagerup and R{\o}rdam note that they can improve the constant $9$ that appears in \cite[Theorem 3.1]{HR95} to $\sqrt{45}\approx 6.71$).

It is also noteworthy that our estimate in  \Cref{thm:bounded_perturbation} does not depend on $d$. This makes some hope that with our methods a similar result can be obtained for $d=\infty$. However, the infinite-dimensional analogue of the Stone-von Neumann theorem is known to be false, see e.g.\ Summer's nice survey \cite{Sum01}. Therefore, the proof does not directly generalize, not even if one assumes the $u_k$ to be given as Weyl operators in Fock representation $u_k(t)=W(tx_k), x_k\in\ell^2(\bN),$ from the start.

\section*{Acknowledgements}
We thank the anonymous referee for several helpful comments and corrections which improved the quality of this manuscript.
MG thanks Franz Luef for discussions about the role of noncommutative tori in time-frequency analysis and possible applications of dilation techniques, which actually stimulated this work. MG is also grateful for the received hospitality during his visit to the Technion in May 2022.

\bibliographystyle{myplainurl}
\bibliography{GS}


\linespread{1}
\setlength{\parindent}{0pt}

\end{document}